\renewcommand{\@seccntformat}[1]{{\csname the#1\endcsname}{\normalsize .}\hspace{.5em}}
\def \[{\begin{equation}}
\def \]{\end{equation}}
\newtheorem{theorem}{Theorem}[section]
\newtheorem{lemma}[theorem]{Lemma}
\newtheorem{corollary}[theorem]{Corollary}
\begin{document}

\setlength{\baselineskip}{15pt}

\begin{center}{\Large \bf Note on VDB Topological Indices of \boldmath$k$-Cyclic Graphs}
\vspace{4mm}

{\large Hechao Liu$^{a,}$\footnote{Corresponding author.\\
\hspace*{5mm}E-mail: hechaoliu@yeah.net; hechaoliu@m.scnu.edu.cn (H. Liu),\ duzn@m.scnu.edu.cn (Z. Du),\ fayger@qq.com (Y. Huang),\ hlchen@ccsu.edu.cn (H. Chen),\ sureshkako@gmail.com (S. Elumalai)}, \ \ Zenan Du$^a$, \ \ Yufei Huang$^b$,\ \ Hanlin Chen$^c$,\ \ Suresh Elumalai$^{d}$}\vspace{2mm}

$^a$ School of Mathematical Sciences, South China Normal University, Guangzhou, 510631, P. R. China

$^b$ Department of Mathematics Teaching, Guangzhou Civil Aviation College, Guangzhou, 510403, P. R. China

$^c$  School of Mathematics, Changsha University, Changsha, Hunan 410022, P. R. China

$^d$ Department of Mathematics, College of Engineering and Technology, SRM Institute of Science and Technology, Kattankulathur, Chengalpet 603 203, India
\end{center}

\noindent {\bf Abstract}:\ Let $G$ be a connected graph with $n$ vertices and $m$ edges.
The vertex-degree-based topological index (VDB) (or graphical function-index) $TI(G)$ of $G$ with edge-weight function $I(x,y)$ is defined as
$$TI(G)=\sum\limits_{uv\in E(G)}I(d_{u},d_{v}),$$
where $I(x,y)>0$ is a symmetric real function with $x\geq 1$ and $y\geq 1$, $d_{u}$ is the degree of vertex $u$ in $G$.

In this note, we deduce a number of previously established results, and state a few new.
For a VDB topological index $TI$ with the property $P^{*}$, we can obtain the minimum $k$-cyclic (chemical) graphs for $k\geq3$, $n\geq 5(k-1)$. These VDB topological indices include the Sombor index, the general Sombor index, the $p$-Sombor index, the general sum-connectivity index and so on.
Thus this note extends the results of Liu et al. [H. Liu, L. You, Y. Huang, Sombor index of c-cyclic chemical graphs, MATCH Commun. Math. Comput. Chem. 90 (2023) 495-504] and Ali et al. [A. Ali, D. Dimitrov, Z. Du, F. Ishfaq, On the extremal graphs for general sum-connectivity index $(\chi_{\alpha})$ with given cyclomatic number when $\alpha>1$, Discrete Appl. Math. 257 (2019) 19-30].

\vspace{2mm} \noindent{\it Keywords:}
VDB topological index; $k$-cyclic graph; $k$-cyclic chemical graph
\vspace{2mm}

\noindent{AMS subject classification:} 05C09; 05C92

\section{\normalsize Introduction}\setcounter{equation}{0}
In this paper, all notations and terminologies used but not defined can refer to Bondy and Murty \cite{bond2008}.
Let $G$ be a connected graph with vertex set $V(G)$ and edge set $E(G)$, where $|V(G)|=n$ and $|E(G)|=m$.
Let $I(x,y)>0$ be a symmetric real function with $x\geq 1$ and $y\geq 1$, $d_{u}$ the degree of vertex $u$ in $G$. The vertex-degree-based topological index (VDB for short)(or graphical function-index) $TI(G)$ of $G$ with edge-weight function $I(x,y)$ was defined as \cite{guan2013}
\begin{equation}\label{eq:11}
TI(G)=\sum_{uv\in E(G)}I(d_{u},d_{v}).
\end{equation}
When $I(x,y)$ is $\sqrt{x^{2}+y^{2}}$, $(x^{2}+y^{2})^{\alpha}$, $(x^{p}+y^{p})^{\frac{1}{p}}$, $(x+y)^{\alpha}$, we call $TI(G)$ as the Sombor index \cite{gumn2021}, general Sombor index \cite{huzh2022}, $p$-Sombor index \cite{trdo2021}, general sum-connectivity index \cite{zhtr2010}, respectively.

The exponential vertex-degree-based topological index $e^{TI}(G)$ of $G$ with edge-weight function $I(x,y)$ was defined as \cite{raja2019}
\begin{equation}\label{eq:12}
e^{TI}(G)=\sum_{uv\in E(G)}e^{I(d_{u},d_{v})}.
\end{equation}

In \cite{racr2014}, Rada et al. determined the bound of $TI$ over the set of graphs with $n$ vertices.
As an application, they found the extremal values of the general Randi\'{c} index $R_{t}$ when $t\in (-1,-\frac{1}{2})$.
Cruz et al. \cite{crra2019,crrs2022} determined extremal trees and unicyclic graphs for (exponential) vertex-degree-based topological indices.
Cruz et al. \cite{crmr2020} determined extremal values of (exponential) vertex-degree-based topological indices over chemical trees.
By using the majorization theory, Yao et al. \cite{yald2019} presented a uniform
method to some extremal results together with its corresponding extremal graphs
for vertex-degree-based invariants among the class of trees, unicyclic graphs and bicyclic
graphs with fixed number of independence number and/or matching number, respectively.

Recently, Hu et al. \cite{hllp2022} obtained some upper bounds and lower bounds for the topological index $TI(G)$ and give some graphs of given order and size achieving the bounds.
Among bipartite graphs with given order and matching number/vertex cover number/edge cover
number/independence number, among multipartite graphs with given order, and among graphs
with given order and chromatic number, Vetr\'{i}k \cite{vetr2023} presented the graphs having the maximum degree-based-index if that index satisfies certain conditions. They also show that those conditions are satisfied by the general sum-connectivity index $\chi_{\alpha}$ for all or some $\alpha\geq 0$.
Zhou et al. \cite{zpla2022} characterized the graphs having the maximum value of certain bond incident
degree indices (including the second Zagreb index, general sum-connectivity index, and general zeroth-order Randi\'{c} index) in the class of all connected graphs with fixed order and number of pendent vertices.
Other related results can be found in \cite{aldi2018,agsa2022,crra2022,rabe2019}.

\textbf{If $I(x,y)$ is monotonically increasing on $x$ (or $y$), and $h(x)=I(a,x)-I(b,x)$ is monotonically decreasing on $x$ for any $a\geq b\geq 0$, then we call $I(x,y)$ has the property $P$.
If $I(x,y)$ has the property $P$ and satisfies that for any $a>b+1\geq 2$, $H(a,b)>0$, where $H(a,b)=a[I(a,a)-I(a-1,a)]-b[I(b+1,b)-I(b,b)]$.
Then we call $I(x,y)$ has the property $P^{*}$.}
For convenience, we say the VDB topological index has the property $P^{*}$ if its edge-weight function $I(x,y)$ has the property $P^{*}$.

Let $\Delta(G)$ and $\delta(G)$ be the maximum degree and minimum degree in $G$, respectively.
Denote by $n_{i}$ the number of vertices of $G$ with degree $i$, $m_{i,j}$ the number of edges of $G$ joining a vertex of degree $i$ and a vertex of degree $j$.
A $k$-vertex is a vertex with degree $k$.
A graph with maximum degree at most $4$ is called as a chemical graph.
A connected graph with $n$ vertices and $n+k-1$ edges is called a connected $k$-cyclic graph.

One important topic in chemical graph theory is determining the extremal $k$-cyclic (chemical) graphs with respect to VDB topological index.
In \cite{aaab2023,alal2022,adak2022,asem2023}, Ali et al. determined the extremal $k$-cyclic (chemical) graphs with respect to sigma index, symmetric division deg index, general Randi\'{c} index, respectively. Liu et al. \cite{luhg2023} considered the minimum Sombor index of $k$-cyclic (chemical) graphs. The first general Zagreb index and the first multiplicative Zagreb index of $k$-cyclic graphs was determined by Bianchi et al. \cite{bcpt2015}.
Other related results can be found in \cite{addi2019,ghas2020,licf2021,tome2019,tiom2022}.

Denote by $\mathcal{G}_{n,k}$ (resp. $\mathcal{CG}_{n,k}$) the set of $k$-cyclic graphs (resp. $k$-cyclic chemical graphs) with $n$ vertices.
The degree set of a graph $G$ is the class of vertex degrees of $G$.
A graph whose degree set has exactly two elements is called a bidegreed graph.
Until now, to the best of my knowledge, there is no conclusion on the general VDB topological index of $k$-cyclic (chemical) graphs.
In this paper, we give a try to unify the solution for the minimum VDB index of $k$-cyclic (chemical) graphs. For a VDB topological index $TI$ with the property $P^{*}$, we obtain the minimum $k$-cyclic (chemical) graphs for $k\geq3$, $n\geq 5(k-1)$. These VDB topological indices with property $P^{*}$ include the Sombor index, the general Sombor index for $\alpha \in [\frac{1}{2},1)$, the $p$-Sombor index for $p \in (1,2]$, the general sum-connectivity index for $\alpha \in (0,1)$ and so on.

\section{\normalsize Main results}\setcounter{equation}{0}
We firstly introduce some important lemmas.
\begin{lemma}\label{l2-01}
Suppose that $I(x,y)$ is a function with the property $P$ and $TI(G)=\sum\limits_{uv\in E(G)}I(d_{u},d_{v})$.
Let $G$ be a connected graph, $u,x,v,y$ be distinct vertices in $G$ satisfied that $ux,vy\in E(G)$, $uy,vx\notin E(G)$, $d_{u}\geq d_{v}$, $d_{y}\geq d_{x}$.
Let $G^{*}=G-\{ux,vy\}+\{uy,vx\}$.
Then $TI(G^{*})\leq TI(G)$,
with equality if and only if $d_{u}=d_{v}$ or $d_{y}=d_{x}$.
\end{lemma}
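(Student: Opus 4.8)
The plan is to exploit the fact that the $2$-switch $G\mapsto G^{*}$ leaves every vertex degree unchanged, so that only the four edges touched by the switch contribute to $TI(G)-TI(G^{*})$, and then to reduce the resulting inequality to a single use of the monotonicity built into property $P$.

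First I would check that $G^{*}$ is a well-defined simple graph and that degrees are preserved: the vertices $u,x,v,y$ are pairwise distinct, $ux,vy\in E(G)$ while $uy,vx\notin E(G)$, so deleting $ux,vy$ and inserting $uy,vx$ creates neither a loop nor a multiple edge. Each of $u,v,x,y$ loses exactly one incident edge and gains exactly one, while all other vertices are untouched; hence $d_{w}$ is the same in $G$ and in $G^{*}$ for every $w$. Consequently every edge lying in both $E(G)$ and $E(G^{*})$ contributes the same term to $TI$, and after cancelling these common terms only the edges $ux,vy$ (present in $G$) and $uy,vx$ (present in $G^{*}$) remain, giving
\[
TI(G)-TI(G^{*})=\big(I(d_{u},d_{x})+I(d_{v},d_{y})\big)-\big(I(d_{u},d_{y})+I(d_{v},d_{x})\big).
\]

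Next I would introduce $h(t)=I(d_{u},t)-I(d_{v},t)$ and observe that the right-hand side is exactly $h(d_{x})-h(d_{y})$. Since $d_{u}\ge d_{v}$, property $P$ tells us $h$ is non-increasing (and strictly decreasing when $d_{u}>d_{v}$); together with $d_{x}\le d_{y}$ this yields $h(d_{x})\ge h(d_{y})$, so $TI(G^{*})\le TI(G)$. Note that the "increasing in $x$" half of property $P$ plays no role in this lemma.

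Finally, for the equality claim: if $d_{u}=d_{v}$ then $h\equiv 0$ and the difference vanishes, and if $d_{x}=d_{y}$ then trivially $h(d_{x})=h(d_{y})$; conversely, if $d_{u}>d_{v}$ and $d_{x}<d_{y}$ the strict decrease of $h$ forces $h(d_{x})>h(d_{y})$, hence $TI(G^{*})<TI(G)$. The only delicate point is precisely this equality analysis, since it relies on reading "monotonically decreasing" in property $P$ as a strict decrease whenever the defining inequality $a\ge b$ is strict; once that reading is in force the lemma follows with no further computation.
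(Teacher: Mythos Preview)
Your proof is correct and follows essentially the same approach as the paper: compute $TI(G)-TI(G^{*})$, regroup it as $h(d_{x})-h(d_{y})$ with $h(t)=I(d_{u},t)-I(d_{v},t)$, and apply the monotonicity of $h$ from property~$P$. You simply supply more detail than the paper does (verifying that $G^{*}$ is simple and degree-preserving, and spelling out the equality analysis and the implicit strictness assumption), but the underlying argument is identical.
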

\begin{proof}
Since $I(x,y)$ be a function with the property $P$ and $TI(G)=\sum\limits_{uv\in E(G)}I(d_{u},d_{v})$, then
$TI(G)-TI(G^{*})=(I(d_{u},d_{x})-I(d_{v},d_{x}))-(I(d_{u},d_{y})$ $-I(d_{v},d_{y}))\geq 0$,
with equality if and only if $d_{u}=d_{v}$ or $d_{y}=d_{x}$.
\end{proof}

\begin{lemma}\label{l2-2}{\rm\cite{lic2023}}
Let $G$ be a connected graph with $n$ vertices and $m$ edges, $I(x,y)$ be a function with the property $P^{*}$ and $TI(G)=\sum\limits_{uv\in E(G)}I(d_{u},d_{v})$.
If $G$ is a graph achieving the least value of $TI(G)$, then $G$ is an almost regular graph, i.e.,
$\Delta(G)-\delta(G) \leq 1$.
\end{lemma}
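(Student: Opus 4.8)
The plan is to argue by contradiction with a degree‑balancing edge transfer. Suppose $G$ is a connected graph on $n$ vertices and $m$ edges that attains the minimum of $TI$, but nevertheless $\Delta:=\Delta(G)\geq\delta(G)+2=:\delta+2$. Then $\Delta\geq3$ and $\delta\geq1$, so the pair $(\Delta,\delta)$ lies in the range $a>b+1\geq2$ in which the defining inequality $H(a,b)>0$ of property $P^{*}$ is available. Fix a vertex $u$ with $d_{u}=\Delta$ and a vertex $w$ with $d_{w}=\delta$, and put $S=N(u)\setminus(N(w)\cup\{w\})$. A short check gives $|N(u)\cap(N(w)\cup\{w\})|\leq\delta$ in every case (when $uw\in E(G)$ this uses $u\notin N(u)$), whence $|S|\geq\Delta-\delta\geq2$. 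For $x\in S$ set $G'=G-ux+wx$; this is again a simple graph on $n$ vertices and $m$ edges in which $d_{u}$ drops to $\Delta-1$, $d_{w}$ rises to $\delta+1$, and all other degrees are unchanged.

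The first thing to nail down is a choice of $x$ for which $G'$ stays connected. If $ux$ is not a bridge, then $G-ux$, hence $G'$, is connected. If $ux$ is a bridge, write $G-ux=A_{x}\cup B_{x}$ with $u\in A_{x}$ and $x\in B_{x}$; then $G'$ is still connected provided $w\in A_{x}$. So the only obstruction is an $x\in S$ with $ux$ a bridge and $w\in B_{x}$, and I claim $u$ has at most one such neighbour: if $x_{1},x_{2}\in S$ were both of this kind, then every $w$–$u$ path would be forced through the bridge $ux_{1}$, so some $w$–$u$ path avoids $ux_{2}$, contradicting $w\in B_{x_{2}}$. Since $|S|\geq2$, an admissible $x$ exists, and we fix it.

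Next I would estimate $TI(G')-TI(G)$ edge by edge, using only property $P$. Monotonicity of $I$ in each slot makes the change on every edge at $u$ nonpositive and on every edge at $w$ nonnegative; and monotonicity of $t\mapsto I(\Delta,t)-I(\Delta-1,t)$ and of $t\mapsto I(\delta+1,t)-I(\delta,t)$, together with $\delta\leq d_{z}\leq\Delta$ for all $z$, gives the per‑edge bounds
\[
I(\Delta-1,d_{z})-I(\Delta,d_{z})\leq-\bigl(I(\Delta,\Delta)-I(\Delta-1,\Delta)\bigr),\qquad I(\delta+1,d_{z})-I(\delta,d_{z})\leq I(\delta+1,\delta)-I(\delta,\delta),
\]
and, since $\delta+1\leq\Delta-1$, also $I(\delta+1,d_{x})-I(\Delta,d_{x})\leq-\bigl(I(\Delta,\Delta)-I(\Delta-1,\Delta)\bigr)$. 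Summing over the $\Delta-1$ edges at $u$ other than $ux$, the replaced edge $ux\rightarrow wx$, and the $\delta$ edges at $w$ (with a small separate accounting for the edge $uw$ in the case $uw\in E(G)$, which folds into the same totals) yields
\[
TI(G')-TI(G)\leq-\Delta\bigl(I(\Delta,\Delta)-I(\Delta-1,\Delta)\bigr)+\delta\bigl(I(\delta+1,\delta)-I(\delta,\delta)\bigr)=-H(\Delta,\delta)<0,
\]
the last strict inequality being exactly the hypothesis that $I$ has property $P^{*}$. This contradicts the minimality of $G$, so $\Delta(G)-\delta(G)\leq1$.

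The arithmetic in the two displayed estimates is routine once the monotonicity bounds are in place; I expect the only genuine difficulty to be the connectivity of $G'$, which is why the argument is organised around the counting facts $|S|\geq2$ and ``at most one bad bridge''. An alternative is to first use Lemma \ref{l2-01} to pass to a minimiser with the same degree sequence but with no cut edge separating a maximum‑degree vertex from a minimum‑degree vertex, and only then perform the transfer; the direct counting above, however, seems the shortest route.
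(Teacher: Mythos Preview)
The paper does not supply its own proof of this lemma; it is quoted without argument from \cite{lic2023}. Your proof is correct and self-contained. The degree-balancing transfer $G'=G-ux+wx$ together with the per-edge bounds coming from property $P$ indeed yields $TI(G')-TI(G)\le -H(\Delta,\delta)<0$ in both cases $uw\notin E(G)$ and $uw\in E(G)$, and the case split folds together exactly as you indicate. The connectivity step is also sound; the cleanest way to phrase the ``at most one bad bridge'' claim is to observe that a $w$--$u$ path visits $u$ only at its terminus and hence uses exactly one edge incident to $u$, so no such path can be forced through two distinct bridges $ux_{1}$ and $ux_{2}$ simultaneously.
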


Since $m=n+k-1$ in $\mathcal{G}_{n,k}$, then by Lemma \ref{l2-2}, we have
\begin{corollary}\label{c2-3}
Let $I(x,y)$ be a function with the property $P^{*}$ and $TI(G)=\sum\limits_{uv\in E(G)}I(d_{u},d_{v})$.
For $k\geq1$, if $G$ is a graph achieving the least value of $TI(G)$ over $\mathcal{G}_{n,k}$, then $\delta(G)\geq 2$.
\end{corollary}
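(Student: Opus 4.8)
The plan is to obtain the conclusion as an immediate consequence of Lemma~\ref{l2-2} combined with a handshaking count. First I would record that any $G\in\mathcal{G}_{n,k}$ has $m=n+k-1$ edges and is connected with $n\geq 3$ (a connected graph carrying at least one cycle has at least three vertices), so $G$ has no isolated vertex and $\delta(G)\geq 1$; the task is therefore to exclude $\delta(G)=1$. Since $G$ minimizes $TI$ over $\mathcal{G}_{n,k}$ and $I(x,y)$ has the property $P^{*}$, Lemma~\ref{l2-2} applies and gives $\Delta(G)-\delta(G)\leq 1$.

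Next I would argue by contradiction, assuming $\delta(G)=1$. Almost regularity then forces $\Delta(G)\leq 2$, so every vertex of $G$ has degree $1$ or $2$, whence $\sum_{v\in V(G)}d_{v}\leq 2n$. On the other hand, the handshaking lemma gives $\sum_{v\in V(G)}d_{v}=2m=2(n+k-1)$. Combining these, $2(n+k-1)\leq 2n$, i.e. $k\leq 1$. For $k\geq 2$ this is already a contradiction; for $k=1$ equality must hold throughout, which forces every vertex to have degree exactly $2$ and so contradicts the presence of a degree-$1$ vertex. Hence $\delta(G)\geq 2$ in every case $k\geq 1$.

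I do not expect a genuine obstacle, since the statement is essentially a direct corollary of Lemma~\ref{l2-2} together with the fact that $m=n+k-1$ exceeds $n-1$. The only point that needs a little care is the boundary case $k=1$, where the edge count equals $n$ and the degree-sum comparison is not strict; there one must additionally invoke that $\delta(G)=1$ genuinely exhibits a degree-$1$ vertex, which cannot coexist with the forced conclusion that all degrees equal $2$.
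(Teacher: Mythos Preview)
Your argument is correct and is essentially the same approach the paper takes: the corollary is deduced directly from Lemma~\ref{l2-2} together with the edge count $m=n+k-1$, and you have merely spelled out the degree-sum contradiction (including the $k=1$ boundary case) that the paper leaves implicit.
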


\begin{lemma}\label{l2-4}{\rm\cite{addi2019}}
For $n\geq 5(k-1)$, if $G\in \mathcal{G}_{n,k}$ such that $\delta(G)\geq 2$ and $\Delta(G)\geq 4$, then
$n_{2}(G)\geq 4$.
\end{lemma}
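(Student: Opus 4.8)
The plan is to argue by contradiction: suppose $G\in\mathcal G_{n,k}$ has $\delta(G)\ge 2$, $\Delta(G)\ge 4$, yet $n_2(G)\le 3$. I would first record the two basic identities available for any graph with $n$ vertices and $m=n+k-1$ edges, namely $\sum_i n_i=n$ and $\sum_i i\,n_i=2m=2n+2(k-1)$. Subtracting twice the first from the second gives the ``excess'' identity $\sum_{i\ge 3}(i-2)\,n_i = 2(k-1) + (2-1)\,n_1\cdot(-1)+\dots$; more cleanly, since $\delta(G)\ge2$ forces $n_1=0$, we get
\[
\sum_{i\ge 3}(i-2)\,n_i \;=\; 2(k-1).
\]
This is the key counting relation: the total degree-excess above $2$ is exactly $2(k-1)$.

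Next I would exploit $n\ge 5(k-1)$ together with $n_2\le 3$. From $n=n_2+\sum_{i\ge 3}n_i$ and $n_2\le 3$ we obtain $\sum_{i\ge 3} n_i \ge n-3 \ge 5(k-1)-3$. On the other hand, each vertex of degree $i\ge 3$ contributes at least $1$ to the left side of the excess identity, so $\sum_{i\ge 3} n_i \le \sum_{i\ge 3}(i-2)n_i = 2(k-1)$. Combining, $5(k-1)-3 \le 2(k-1)$, i.e. $3(k-1)\le 3$, forcing $k\le 2$ — but this already contradicts nothing unless $k\ge 3$; since Lemma \ref{l2-4} is invoked only for $k\ge 3$ in the main argument (and for $k\le 2$ the statement is used via the earlier Corollary \ref{c2-3} and separate unicyclic/bicyclic handling), the remaining case $k=2$ would need the sharper bound. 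Actually for the stated range I expect the clean contradiction to already appear once one also uses $\Delta(G)\ge 4$: a vertex of degree $\ge 4$ contributes at least $2$ to $\sum_{i\ge3}(i-2)n_i$, tightening the count, so that $\sum_{i\ge 3} n_i \le 2(k-1)-1$, yielding $5(k-1)-3\le 2(k-1)-1$, i.e. $3(k-1)\le 2$, impossible for $k\ge 2$. This should close the argument; the role of $\Delta(G)\ge 4$ is precisely to gain that extra unit in the excess count.

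To be safe I would double-check the boundary arithmetic: with $n_2\le 3$ we need $n-3 \le \sum_{i\ge3}(i-2)n_i$, and the right side is $2(k-1)$ minus the ``savings'' from having $n_2$ smaller than whatever the degree-$2$ slack would be — here I must be careful that the excess identity involves only degrees $\ge 3$, so $n_2$ does not appear on the right, and the inequality $\sum_{i\ge3}n_i \le \sum_{i\ge3}(i-2)n_i$ is valid with equality iff every vertex of degree $\ge 3$ has degree exactly $3$. So the genuinely delicate point, and the main obstacle, is handling the case where $G$ is $3$-regular apart from a few vertices: there $\sum_{i\ge3}n_i$ is close to $2(k-1)$ and one must verify the inequality $n-3\le 2(k-1)$ fails under $n\ge 5(k-1)$ and $k\ge 3$, which it does since $5(k-1)-3-2(k-1)=3(k-1)-3\ge 3>0$. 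Incorporating $\Delta\ge4$ removes even this edge case. I would then conclude $n_2(G)\ge 4$, completing the proof.
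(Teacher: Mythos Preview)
The paper does not give its own proof of this lemma; it is quoted verbatim from \cite{addi2019}. So there is nothing in the present paper to compare your argument against.

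That said, your counting argument is correct, and once the exploratory detours are trimmed it is short. The clean version is: with $\delta(G)\ge 2$ the identity $\sum_{i\ge 3}(i-2)n_i=2(k-1)$ holds; if some vertex has degree $\ge 4$ it contributes at least $2$ to this sum, so $\sum_{i\ge 3}n_i\le 2(k-1)-1$. If in addition $n_2\le 3$, then $\sum_{i\ge 3}n_i=n-n_2\ge n-3\ge 5(k-1)-3$. Combining gives $5(k-1)-3\le 2(k-1)-1$, i.e.\ $3(k-1)\le 2$, forcing $k\le 1$; but for $k\le 1$ a connected $k$-cyclic graph with $\delta\ge 2$ is a cycle (or does not exist), contradicting $\Delta\ge 4$. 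Hence $n_2\ge 4$.

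Two minor presentational points. First, you should state up front that the case $k\le 1$ is vacuous under the hypotheses, rather than discovering it mid-argument; this removes the apparent worry about whether the lemma is being ``invoked only for $k\ge 3$''. Second, your first pass (without using $\Delta\ge 4$) gives only $k\le 2$, and you then correctly observe that $\Delta\ge 4$ buys the extra unit needed; in the final write-up just use the sharper bound from the start and drop the preliminary version.
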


\begin{lemma}\label{l2-5}{\rm\cite{caha2010}}
For $n\geq 5(k-1)$, if $G\in \mathcal{G}_{n,k}$ such that $\delta(G)\geq 2$ and $\Delta(G)\geq 4$, then
$m_{2,2}(G)\geq 1$.
\end{lemma}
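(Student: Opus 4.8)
I would argue by contradiction. Suppose $G\in\mathcal{G}_{n,k}$ satisfies $\delta(G)\ge 2$, $\Delta(G)\ge 4$ and $n\ge 5(k-1)$, yet contains no edge joining two $2$-vertices, i.e.\ $m_{2,2}(G)=0$. Write $V_2$ for the set of $2$-vertices, $V_{\ge 3}=V(G)\setminus V_2$ (there are no vertices of degree $\le 1$ since $\delta(G)\ge 2$), and $n_2=|V_2|$. The strategy is to sandwich $n_2$ between a lower bound coming from the handshake identity together with the cyclomatic constraint $m=n+k-1$, and an upper bound coming from the assumption $m_{2,2}(G)=0$, and then to check that these two estimates are incompatible with $n\ge 5(k-1)$.

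For the lower bound I start from $\sum_{v\in V(G)}(d_v-2)=2m-2n=2(k-1)$. Every vertex of $V_2$ contributes $0$ and every vertex of $V_{\ge 3}$ contributes $d_v-2\ge 1$, so naively $|V_{\ge 3}|\le 2(k-1)$. I then sharpen this using $\Delta(G)\ge 4$: the vertex $w$ of maximum degree contributes $d_w-2\ge 2$, so $|V_{\ge 3}|\le 2(k-1)-1$, whence $n_2=n-|V_{\ge 3}|\ge n-2(k-1)+1$.

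For the upper bound, $m_{2,2}(G)=0$ says $V_2$ is an independent set, so every edge incident to a $2$-vertex has its other endpoint in $V_{\ge 3}$; since each $2$-vertex has exactly two such edges and each of these edges has a single endpoint in $V_2$, the number of $V_2$--$V_{\ge 3}$ edges is exactly $2n_2$. On the other hand this number is at most $\sum_{v\in V_{\ge 3}}d_v=2m-2n_2=2(n+k-1)-2n_2$, which gives $2n_2\le 2(n+k-1)-2n_2$, i.e.\ $n_2\le\tfrac12(n+k-1)$.

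Putting the two bounds together, $n-2(k-1)+1\le\tfrac12(n+k-1)$, which rearranges to $n\le 5(k-1)-2$, contradicting $n\ge 5(k-1)$; hence $m_{2,2}(G)\ge 1$. The one point that needs care is that the plain handshake count alone yields only $|V_{\ge 3}|\le 2(k-1)$ and thus $n\le 5(k-1)$, which is compatible with the hypothesis — the strict gap that produces the contradiction comes precisely from the $+1$ improvement supplied by the assumption $\Delta(G)\ge 4$. If one prefers, Lemma~\ref{l2-4} provides $n_2\ge 4$ and one could instead localize the counting around a vertex of degree $\ge 4$, but the global double count above seems the shortest and avoids any case analysis.
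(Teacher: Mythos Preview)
Your argument is correct. The double count is clean: the lower bound $n_2\ge n-2(k-1)+1$ uses the cyclomatic identity $\sum_v(d_v-2)=2(k-1)$ together with the extra unit from a vertex of degree $\ge 4$, the upper bound $n_2\le\tfrac12(n+k-1)$ follows from independence of $V_2$, and combining them gives $n\le 5(k-1)-2$, contradicting the hypothesis. The only subtlety you flagged yourself---that the $\Delta(G)\ge 4$ assumption is what provides the strict gap---is handled properly.

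As for comparison with the paper: there is nothing to compare against. The paper does not prove Lemma~\ref{l2-5}; it simply quotes it from Caporossi, Hansen and Vuki\v{c}evi\'{c}~\cite{caha2010}. Your self-contained proof therefore supplies something the paper omits. The reference to Lemma~\ref{l2-4} at the end is unnecessary---your global count already closes the argument without it---so you could drop that final sentence.
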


\begin{lemma}\label{l2-6}
Let $I(x,y)$ be a function with the property $P^{*}$ and $TI(G)=\sum\limits_{uv\in E(G)}I(d_{u},d_{v})$.
For $k\geq3$ and $n\geq 5(k-1)$, if $G$ is a graph achieving the least value of $TI(G)$ over $\mathcal{G}_{n,k}$, then $\Delta(G)=3$.
\end{lemma}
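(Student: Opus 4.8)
The plan is to argue by contradiction: suppose $G$ achieves the least value of $TI$ over $\mathcal{G}_{n,k}$ but $\Delta(G)\geq 4$. By Corollary~\ref{c2-3} we already know $\delta(G)\geq 2$, and since $n\geq 5(k-1)$ with $k\geq 3$, Lemmas~\ref{l2-4} and~\ref{l2-5} give us two crucial structural facts: $n_{2}(G)\geq 4$ and $m_{2,2}(G)\geq 1$. The strategy is to perform a local edge-swapping surgery that strictly decreases $TI$, contradicting minimality. First I would pick an edge $xy$ with $d_{x}=d_{y}=2$ (guaranteed by Lemma~\ref{l2-5}), and a vertex $w$ of maximum degree $\Delta=\Delta(G)\geq 4$. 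The idea is to ``move'' one unit of degree away from $w$ and redistribute it, effectively replacing some high-degree incidences by lower-degree ones; since $I$ has the property $P^{*}$, and in particular satisfies the inequality $H(a,b)>0$ for $a>b+1\geq 2$, such a redistribution should be forced to lower the index.

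The key steps, in order, are: (1) invoke Corollary~\ref{c2-3}, Lemma~\ref{l2-4}, and Lemma~\ref{l2-5} to set up the structural scaffolding under the assumption $\Delta\geq 4$; (2) locate a $2$-$2$ edge $xy$ and a $\Delta$-vertex $w$, and distinguish cases according to whether $w$ is adjacent to $x$, to $y$, to both, or to neither — in the generic case $w$ is adjacent to neither; (3) choose a neighbour $z$ of $w$ and apply Lemma~\ref{l2-01} (the Kelmans-type transformation $G^{*}=G-\{ux,vy\}+\{uy,vx\}$) repeatedly to shift the configuration into a normalized form where the high-degree vertex and the light edge sit in a controlled local picture; (4) in that normalized picture, carry out the actual degree-decreasing swap — delete an edge at $w$ and add an edge incident to one of the $2$-vertices — and compute the change in $TI$ as a combination of terms of the shape $I(\Delta,\cdot)-I(\Delta-1,\cdot)$ against $I(3,\cdot)-I(2,\cdot)$ type differences; (5) show this change is negative using monotonicity of $I$ together with the property-$P^{*}$ inequality $H(\Delta,2)=\Delta[I(\Delta,\Delta)-I(\Delta-1,\Delta)]-2[I(3,2)-I(2,2)]>0$ (here $a=\Delta\geq 4 > 2+1=b+1$, so the hypothesis of $P^{*}$ applies). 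The contradiction forces $\Delta(G)\leq 3$, and since $\delta(G)\geq 2$ would make $G$ a $2$-regular graph (a cycle, with $m=n$, i.e. $k=1$, impossible for $k\geq 3$), we conclude $\Delta(G)=3$.

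The main obstacle will be step~(4)–(5): making the local swap genuinely valid — i.e. ensuring that after deleting an edge at $w$ and adding an edge, the graph stays connected, stays in $\mathcal{G}_{n,k}$ (same number of vertices and edges), and that the non-adjacency conditions needed for Lemma~\ref{l2-01} are actually met. Several sub-cases arise depending on how $w$, its chosen neighbour, and the $2$-vertices of the $2$-$2$ edge interact (for instance if they already share neighbours, or if removing an edge at $w$ disconnects the graph), and each must be handled so that the net effect is a legitimate member of $\mathcal{G}_{n,k}$ with strictly smaller $TI$. The property $P^{*}$ is precisely engineered so that the final arithmetic collapses to the single inequality $H(\Delta,2)>0$; once the surgery is set up correctly, the sign of $\Delta TI$ is immediate, but the bookkeeping of which degrees change and by how much — and verifying that $n\geq 5(k-1)$ supplies enough $2$-vertices to absorb the redistribution — is where the care is needed.
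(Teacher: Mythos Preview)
Your proposal takes a much harder road than the paper does, and leaves the genuinely difficult part unresolved. The paper's proof is essentially one line: by Corollary~\ref{c2-3} we have $\delta(G)\geq 2$; by Lemma~\ref{l2-5} there is a $(2,2)$-edge, so $\delta(G)=2$; but Lemma~\ref{l2-2} (which you never invoke) says that any $TI$-minimizer is almost regular, i.e.\ $\Delta(G)-\delta(G)\leq 1$, so $\Delta(G)\leq 3$. Since $k\geq 3$ forces $\Delta(G)\geq 3$, we are done. No surgery, no case analysis, no appeal to $H(\Delta,2)>0$ is needed at this point --- all of that work is already packaged inside Lemma~\ref{l2-2}.

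What you are sketching in steps (3)--(5) is, in effect, a re-proof of Lemma~\ref{l2-2} specialized to the present situation. That is not wrong in spirit, but it is circular relative to the paper's structure, and you yourself flag the real obstacle: guaranteeing that the degree-moving swap keeps the graph simple and connected. Lemma~\ref{l2-01} alone cannot do this, since it preserves the degree sequence; the operation you need in step~(4) genuinely changes degrees, and the connectivity/simplicity bookkeeping across the sub-cases (whether $w$ is adjacent to $x$ or $y$, whether the chosen neighbour of $w$ already sees the $2$-vertex, whether deleting an edge at $w$ disconnects $G$) is exactly the nontrivial content of Lemma~\ref{l2-2}. Also note that Lemma~\ref{l2-4} ($n_{2}\geq 4$) is not needed here at all; $m_{2,2}\geq 1$ already gives a $2$-vertex. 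The clean fix is simply to cite Lemma~\ref{l2-2} directly, as the paper does.
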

\begin{proof}
Suppose to the contrary that $G$ is a counter-example, i.e., $\Delta(G)\geq 4$,
since $k\geq3$ and $G\in \mathcal{G}_{n,k}$.
By Corollary \ref{c2-3}, $\delta(G)\geq 2$.
By Lemma \ref{l2-5}, $m_{2,2}\geq 1$.
Thus $n_{2}\neq 0$ and $n_{4}\neq 0$.
Since $G\in \mathcal{G}_{n,k}$, $G$ is a connected graph with $n$ vertices and $n+k-1$ edges.
Then by Lemma \ref{l2-2}, $G$ is an almost regular graph, which is a contradiction with $n_{2}\neq 0$ and $n_{4}\neq 0$. Therefore, $\Delta(G)=3$.
\end{proof}

\begin{lemma}\label{l2-7}
Let $I(x,y)$ be a function with the property $P^{*}$ and $TI(G)=\sum\limits_{uv\in E(G)}I(d_{u},d_{v})$.
For $k\geq3$ and $n\geq 5(k-1)$, if $G$ is a graph achieving the least value of $TI(G)$ over $\mathcal{G}_{n,k}$, then
$m_{2,3}=2$.
\end{lemma}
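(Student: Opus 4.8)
The plan is to reduce $TI(G)$, for the minimizer $G$, to a strictly increasing linear function of the single parameter $m_{2,3}(G)$, prove $m_{2,3}(G)\ge 2$, and exhibit a member of $\mathcal{G}_{n,k}$ attaining $m_{2,3}=2$; minimality then forces $m_{2,3}(G)=2$.

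First I would pin down the degree structure of $G$. By Lemma~\ref{l2-6} we have $\Delta(G)=3$, and by Corollary~\ref{c2-3} we have $\delta(G)\ge 2$, so every vertex of $G$ has degree $2$ or $3$. From $n_2+n_3=n$ and $2n_2+3n_3=2m=2(n+k-1)$ one gets $n_3=2(k-1)$ and $n_2=n-2(k-1)$, both positive since $k\ge 3$ and $n\ge 5(k-1)$; hence $G$ is bidegreed with degree set $\{2,3\}$. Counting edge-ends at the degree-$2$ and degree-$3$ vertices gives $2m_{2,2}+m_{2,3}=2n_2$ and $m_{2,3}+2m_{3,3}=3n_3$, so
\[
m_{2,2}=n-2(k-1)-\tfrac12 m_{2,3},\qquad m_{3,3}=3(k-1)-\tfrac12 m_{2,3},
\]
and in particular $m_{2,3}$ is even. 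Substituting into $TI(G)=m_{2,2}I(2,2)+m_{2,3}I(2,3)+m_{3,3}I(3,3)$ I would get
\[
TI(G)=\bigl(n-2(k-1)\bigr)I(2,2)+3(k-1)I(3,3)+c\,m_{2,3},\qquad c:=I(2,3)-\tfrac12 I(2,2)-\tfrac12 I(3,3),
\]
so $TI(G)$ depends on $G$ only through $m_{2,3}$. Then I would note $c>0$: by property $P$ applied with $a=3>b=2$, the function $h(x)=I(3,x)-I(2,x)$ is (strictly) decreasing, so $h(2)>h(3)$, i.e. $I(2,3)+I(3,2)>I(2,2)+I(3,3)$, which is exactly $c>0$. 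Thus, within the class of bidegreed graphs of $\mathcal{G}_{n,k}$ with degree set $\{2,3\}$, minimizing $TI$ is the same as minimizing $m_{2,3}$.

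It remains to determine $\min m_{2,3}$. For the lower bound: $G$ is connected and has vertices of both degrees, so there is at least one $2$--$3$ edge, and with the parity above this gives $m_{2,3}(G)\ge 2$. For attainability I would exhibit $\widetilde G\in\mathcal{G}_{n,k}$ with degree set $\{2,3\}$ and $m_{2,3}(\widetilde G)=2$: take a connected graph $H_0$ on $2(k-1)$ vertices with $3k-4$ edges, maximum degree $3$, and exactly two vertices of degree $2$ — for instance the cycle $C_{2(k-1)}$ together with $k-2$ suitably chosen independent chords — and join its two degree-$2$ vertices by a path through the remaining $n-2(k-1)$ vertices. A direct count shows $\widetilde G$ has $n$ vertices, $n+k-1$ edges, maximum degree $3$, and only the two $2$--$3$ edges at the ends of the attached path. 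Since $G$ attains the minimum, $TI(G)\le TI(\widetilde G)$; but $c>0$ together with $m_{2,3}(G)\ge 2=m_{2,3}(\widetilde G)$ gives $TI(\widetilde G)\le TI(G)$, whence $m_{2,3}(G)=2$.

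The one point requiring care is the construction of $\widetilde G$, i.e. verifying that $m_{2,3}=2$ is genuinely realizable in $\mathcal{G}_{n,k}$ for every admissible $(n,k)$; everything else is bookkeeping with the handshake identities plus the inequality $c>0$ coming from property $P$. If one prefers to avoid the explicit construction, an alternative is to show that if $m_{2,3}(G)\ge 4$ then a suitable edge-switch as in Lemma~\ref{l2-01}, or a local rerouting of a degree-$2$ path, lowers $m_{2,3}$ by $2$ while staying in $\mathcal{G}_{n,k}$, contradicting minimality — but making that switch precise in full generality is itself the main obstacle, so the construction route looks cleaner.
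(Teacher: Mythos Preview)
Your argument is correct and takes a genuinely different route from the paper. The paper argues by contradiction via the edge-switch of Lemma~\ref{l2-01}: assuming $m_{2,3}\ge 4$, it asserts one can choose two non-adjacent $(2,3)$-edges so that swapping them as in Lemma~\ref{l2-01} produces a graph $G^\ast$ that is still simple and connected, and then computes $TI(G)-TI(G^\ast)=2I(2,3)-I(2,2)-I(3,3)>0$. You instead express $TI(G)$ as an affine function of $m_{2,3}$ with slope $c=I(2,3)-\tfrac12 I(2,2)-\tfrac12 I(3,3)>0$ (this is exactly the same strict inequality the paper uses, just repackaged), derive $m_{2,3}\ge 2$ from connectedness and parity, and then exhibit an explicit $\widetilde G\in\mathcal G_{n,k}$ with $m_{2,3}(\widetilde G)=2$, letting minimality finish the job. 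The trade-off is clear: the paper's route needs the (stated but not justified) fact that a connectivity- and simplicity-preserving swap exists whenever $m_{2,3}\ge 4$, which you correctly flag as the delicate point; your route replaces that with a one-off construction of a witness graph, which you sketch concretely (a cycle $C_{2(k-1)}$ with $k-2$ independent chords, plus a pendant path through the remaining $n-2(k-1)$ vertices joining the two degree-$2$ vertices) and which is straightforward to verify under $n\ge 5(k-1)$. Both approaches ultimately hinge on the same strict concavity-type inequality $2I(2,3)>I(2,2)+I(3,3)$ coming from property~$P$.
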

\begin{proof}
As $G\in \mathcal{G}_{n,k}$ ($k\geq3$, $n\geq 5(k-1)$) and $I(x,y)$ is a function with the property $P^{*}$, $G$ is a bidegreed graph with degree set $\{2,3\}$ by Corollary \ref{c2-3} and Lemma \ref{l2-6}.
As $G$ is a connected bidegreed graph, $m_{2,3}>0$.
Since $\sum\limits_{1\leq j\leq 4, j\neq i}m_{i,j}+2m_{i,i}=in_{i}$ for $i=1,2,3,4$, then $2m_{2,2}+m_{2,3}=2n_{2}$.
As $k\geq3$, $m_{2,3}\geq 2$ must be an even number.

If $m_{2,3}\geq 4$, we can always find such two non-adjacent $(2,3)$-edges in $G$, by using the transformation of Lemma \ref{l2-01}, the graph $G^{*}$ is still connected simple graph and $G^{*}\in \mathcal{G}_{n,k}$ (Note that we can not use the transformation of Lemma \ref{l2-01} if $G^{*}$ is not connected simple graph). In this case $TI(G)-TI(G^{*})=2I(2,3)-I(2,2)-I(3,3)=(I(2,3)-I(2,2))-(I(3,3)-I(2,3))>0$, which is a contradiction.
Thus $m_{2,3}=2$.
\end{proof}

\begin{figure}[ht!]
  \centering
  \scalebox{.1}[.1]{\includegraphics{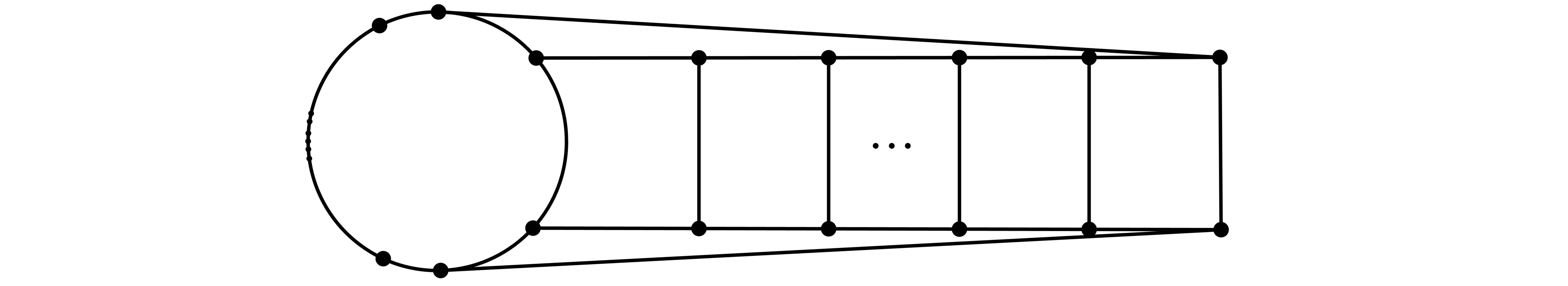}}
  \caption{An example of minimum chemical graph in Theorem \ref{t1-1}.}
 \label{fig-1}
\end{figure}

At last, we give our main results.
\begin{theorem}\label{t1-1}
Let $I(x,y)$ be a function with the property $P^{*}$ and $TI(G)=\sum\limits_{uv\in E(G)}I(d_{u},d_{v})$.
For $k\geq3$ and $n\geq 5(k-1)$, if $G$ is a graph achieving the least value of $TI(G)$ over $\mathcal{G}_{n,k}$, then
$G$ is a bidegreed graph with degree set $\{2,3\}$, and $m_{2,3}=2,m_{2,2}=n-2k+1,m_{3,3}=3k-4$. Moreover,
$TI(G)=2I(2,3)+(n-2k+1)I(2,2)+(3k-4)I(3,3)$.
\end{theorem}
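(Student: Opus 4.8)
The plan is to stitch together the structural lemmas already established and then finish with an elementary degree-counting argument; essentially all of the analytic content sits in the earlier lemmas, and Theorem~\ref{t1-1} is the bookkeeping that extracts the numbers. First I would record what the preliminaries say about a minimizer $G\in\mathcal{G}_{n,k}$: Corollary~\ref{c2-3} gives $\delta(G)\ge 2$, Lemma~\ref{l2-6} gives $\Delta(G)=3$, so every vertex of $G$ has degree $2$ or $3$. Since $m=n+k-1\ge n+2$ for $k\ge 3$, $G$ is not a cycle, hence $n_{3}>0$; and $G$ cannot be $3$-regular, for that would force $3n=2m=2(n+k-1)$, i.e. $n=2(k-1)<5(k-1)$, contradicting the hypothesis on $n$. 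Thus $n_{2}>0$ as well, and $G$ is a bidegreed graph with degree set $\{2,3\}$ — precisely the setting in which Lemma~\ref{l2-7} applies, so $m_{2,3}=2$.

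Next I would pin down all of the vertex- and edge-counts from the standard identities. The handshake lemma gives $2n_{2}+3n_{3}=2m=2(n+k-1)$, which together with $n_{2}+n_{3}=n$ yields $n_{3}=2(k-1)$ and $n_{2}=n-2(k-1)$. The incidence identities $2m_{2,2}+m_{2,3}=2n_{2}$ and $2m_{3,3}+m_{2,3}=3n_{3}$, with $m_{2,3}=2$ substituted, then give $m_{2,2}=n-2k+1$ and $m_{3,3}=3k-4$. As a consistency check, $m_{2,2}+m_{2,3}+m_{3,3}=(n-2k+1)+2+(3k-4)=n+k-1=m$, and both $m_{2,2}$ and $m_{3,3}$ are nonnegative under $k\ge 3$ and $n\ge 5(k-1)$.

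Finally, since all edges of $G$ are of type $(2,2)$, $(2,3)$ or $(3,3)$, the definition $TI(G)=\sum_{uv\in E(G)}I(d_{u},d_{v})$ collapses to $TI(G)=m_{2,2}I(2,2)+m_{2,3}I(2,3)+m_{3,3}I(3,3)$, and inserting the values just computed yields the asserted formula $TI(G)=2I(2,3)+(n-2k+1)I(2,2)+(3k-4)I(3,3)$. I would also add a sentence confirming the statement is non-vacuous: $\mathcal{G}_{n,k}$ is a nonempty finite set, so a minimizer exists, and one can moreover display an explicit member realizing these parameters (cf. Figure~\ref{fig-1}). I do not expect a genuine obstacle in this last step — the real work (the almost-regularity forced by property $P^{*}$ in Lemma~\ref{l2-2}, and the counting bounds and edge-swap behind Lemmas~\ref{l2-4}--\ref{l2-7}) has already been done; the only points requiring care are correctly importing the conclusions ``bidegreed with degree set $\{2,3\}$'' and ``$m_{2,3}=2$'' and noting that the resulting linear system in $n_{2},n_{3},m_{2,2},m_{3,3}$ has a unique solution.
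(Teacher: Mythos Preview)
Your proposal is correct and follows essentially the same route as the paper: invoke Corollary~\ref{c2-3}, Lemma~\ref{l2-6}, and Lemma~\ref{l2-7} to pin down the degree set and $m_{2,3}=2$, then solve the linear system coming from $n_{2}+n_{3}=n$, $2n_{2}+3n_{3}=2(n+k-1)$, and the incidence identities. If anything you are slightly more careful than the paper, which asserts ``bidegreed with degree set $\{2,3\}$'' directly from $\delta\ge 2$ and $\Delta=3$ without explicitly excluding the $3$-regular case as you do.
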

\begin{proof}
As $G\in \mathcal{G}_{n,k}$ ($k\geq3$, $n\geq 5(k-1)$) and $I(x,y)$ is a function with the property $P^{*}$, $G$ is a bidegreed graph with degree set $\{2,3\}$ by Corollary \ref{c2-3} and Lemma \ref{l2-6}.
By Lemma \ref{l2-7}, $m_{2,3}=2$.

Since $\sum\limits_{i=1}^{4}n_{i}=n$ and $\sum\limits_{i=1}^{4}in_{i}=2(n+k-1)$, then by Lemma \ref{l2-6}, we have $n_{2}+n_{3}=n$ and $2n_{2}+3n_{3}=2m=2(n+k-1)$. Thus $n_{3}=2(k-1),n_{2}=n-2k+2$.
Since $\sum\limits_{1\leq j\leq 4, j\neq i}m_{i,j}+2m_{i,i}=in_{i}$ for $i=1,2,3,4$.
Then by Lemma \ref{l2-6}, we have $2m_{2,2}+m_{2,3}=2n_{2}$ and $m_{2,3}+2m_{3,3}=3n_{3}$. Since $m_{2,3}=2$, then $m_{2,2}=n-2k+1,m_{3,3}=3k-4$, and
$TI(G)=2I(2,3)+(n-2k+1)I(2,2)+(3k-4)I(3,3)$.
\end{proof}

Note that we can not use the transformation of Lemma \ref{l2-01} for the graph in Figure \ref{fig-1}
to decrease $m_{2,3}$, since the graph after the transformation is not connected graph.
Since $\mathcal{CG}_{n,k}\subseteq \mathcal{G}_{n,k}$, we have following .
\begin{corollary}\label{c1-4}
Let $I(x,y)$ be a function with the property $P^{*}$ and $TI(G)=\sum\limits_{uv\in E(G)}I(d_{u},d_{v})$.
For $k\geq3$ and $n\geq 5(k-1)$, if $G$ is a graph achieving the least value of $TI(G)$ over $\mathcal{CG}_{n,k}$, then
$G$ is a bidegreed graph with degree set $\{2,3\}$, and $m_{2,3}=2,m_{2,2}=n-2k+1,m_{3,3}=3k-4$. Moreover,
$TI(G)=2I(2,3)+(n-2k+1)I(2,2)+(3k-4)I(3,3)$.
\end{corollary}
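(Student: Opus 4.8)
The plan is to reduce Corollary~\ref{c1-4} to Theorem~\ref{t1-1}: the key observation is that the (unconstrained) minimiser of $TI$ over $\mathcal{G}_{n,k}$ is \emph{already} a chemical graph, so restricting attention to the smaller family $\mathcal{CG}_{n,k}$ costs nothing.

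First I would invoke Lemma~\ref{l2-6}: for $k\ge 3$ and $n\ge 5(k-1)$, any $G$ attaining $\min_{H\in\mathcal{G}_{n,k}}TI(H)$ has $\Delta(G)=3$, hence $\Delta(G)\le 4$, i.e.\ $G\in\mathcal{CG}_{n,k}$. (Such a minimiser exists because $\mathcal{G}_{n,k}$ is a nonempty finite set for this range of parameters.) Consequently $\mathcal{CG}_{n,k}\neq\emptyset$ and, since the inclusion $\mathcal{CG}_{n,k}\subseteq\mathcal{G}_{n,k}$ gives $\min_{H\in\mathcal{CG}_{n,k}}TI(H)\ge\min_{H\in\mathcal{G}_{n,k}}TI(H)$ while the minimiser just produced lies in $\mathcal{CG}_{n,k}$ and furnishes the reverse inequality, the two minima coincide. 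Now let $G$ attain $\min_{H\in\mathcal{CG}_{n,k}}TI(H)$; by the equality of the minima, $G$ also attains $\min_{H\in\mathcal{G}_{n,k}}TI(H)$, so Theorem~\ref{t1-1} applies to $G$ verbatim and yields that $G$ is bidegreed with degree set $\{2,3\}$, that $m_{2,3}=2$, $m_{2,2}=n-2k+1$, $m_{3,3}=3k-4$, and that $TI(G)=2I(2,3)+(n-2k+1)I(2,2)+(3k-4)I(3,3)$.

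There is no real obstacle here; the only thing worth flagging is \emph{why} the inclusion argument closes, namely that Corollary~\ref{c2-3} together with Lemma~\ref{l2-6} pins the maximum degree of an unconstrained minimiser at exactly $3$, which is precisely what drops it into the chemical class. If one instead wanted a self-contained proof, one could rerun Corollary~\ref{c2-3}, Lemmas~\ref{l2-6} and~\ref{l2-7}, and Theorem~\ref{t1-1} with $\mathcal{G}_{n,k}$ replaced by $\mathcal{CG}_{n,k}$ throughout: the transformation of Lemma~\ref{l2-01} is a $2$-switch and therefore preserves the entire degree sequence, so it never creates a vertex of degree exceeding $4$ and keeps a chemical graph chemical; every use of it in those proofs stays legitimate, and the identities $\sum n_i=n$, $\sum i\,n_i=2(n+k-1)$, $\sum_{j\neq i}m_{i,j}+2m_{i,i}=i\,n_i$ then give the stated edge counts exactly as before. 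The short reduction above is cleaner, so that is the route I would take.
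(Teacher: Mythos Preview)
Your proposal is correct and follows the paper's own route: the paper simply notes ``Since $\mathcal{CG}_{n,k}\subseteq \mathcal{G}_{n,k}$'' before stating the corollary, and your argument is exactly the careful unpacking of that one line---the unconstrained minimiser has $\Delta=3$ by Lemma~\ref{l2-6}, so it is chemical, the two minima coincide, and Theorem~\ref{t1-1} transfers verbatim.
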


\section{\normalsize Applications}
The Sombor index \cite{gumn2021} of graph $G$ was defined as $SO(G)=\sqrt{d_{u}^{2}+d_{v}^{2}}$.
A review about the Sombor index can be found in \cite{lgyh2021}.
Let $I(x,y)=\sqrt{x^{2}+y^{2}}$. Since $\sqrt{x^{2}+y^{2}}$ is a function with the property $P^{*}$, then by Theorem \ref{t1-1}, we have the following corollaries immediately, which solves the open problem of \cite{luhg2023}.

\begin{corollary}\label{c1-2}
For $k\geq3$ and $n\geq 5(k-1)$, if $G$ is a graph achieving the least value of the Sombor index over $\mathcal{G}_{n,k}$, then
$G$ is a bidegreed graph with degree set $\{2,3\}$, and $m_{2,3}=2,m_{2,2}=n-2k+1,m_{3,3}=3k-4$. Moreover, $SO(G)=(2n+5k-10)\sqrt{2}+2\sqrt{13}$.
\end{corollary}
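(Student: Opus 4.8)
The plan is to verify directly that the Sombor weight $I(x,y)=\sqrt{x^{2}+y^{2}}$ has the property $P^{*}$ and then to substitute the relevant degrees into the formula of Theorem \ref{t1-1}. For the property $P$, I would note that $\partial_{x}\sqrt{x^{2}+y^{2}}=x/\sqrt{x^{2}+y^{2}}>0$ for $x\geq 1$, so $I$ is monotonically increasing in $x$ (hence, by symmetry, in $y$); and that for any $a\geq b\geq 0$ the function $h(x)=\sqrt{a^{2}+x^{2}}-\sqrt{b^{2}+x^{2}}$ has derivative $h'(x)=x\big(1/\sqrt{a^{2}+x^{2}}-1/\sqrt{b^{2}+x^{2}}\big)\leq 0$, so $h$ is monotonically decreasing. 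Thus $I$ has the property $P$, and it remains to check the inequality defining $P^{*}$.

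The main work is to show $H(a,b)>0$ for all integers $a>b+1\geq 2$ (so $a\geq b+2\geq 3$ and $b\geq 1$). Here
\[
H(a,b)=a\big(a\sqrt{2}-\sqrt{2a^{2}-2a+1}\big)-b\big(\sqrt{2b^{2}+2b+1}-b\sqrt{2}\big),
\]
and rationalising each bracket rewrites this as $H(a,b)=\varphi(a)-\psi(b)$, where
\[
\varphi(t)=\frac{t(2t-1)}{t\sqrt{2}+\sqrt{2t^{2}-2t+1}},\qquad \psi(t)=\frac{t(2t+1)}{t\sqrt{2}+\sqrt{2t^{2}+2t+1}}.
\]
I would then argue in two steps. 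First, $\psi$ is increasing on $[1,\infty)$: dividing numerator and denominator by $t$ gives $\psi(t)=(2t+1)/\big(\sqrt{2}+\sqrt{2+2/t+1/t^{2}}\big)$, an increasing numerator over a decreasing positive denominator; hence $\psi(b)\leq\psi(a-2)\leq\psi(a-1)$. Second, $\varphi(a)>\psi(a-1)$: since $2(a-1)^{2}+2(a-1)+1=2a^{2}-2a+1$, writing $S=\sqrt{2a^{2}-2a+1}$ we have $\varphi(a)=a(2a-1)/(a\sqrt{2}+S)$ and $\psi(a-1)=(a-1)(2a-1)/((a-1)\sqrt{2}+S)$; cancelling $2a-1>0$ and cross-multiplying, the desired inequality collapses to $aS>(a-1)S$, which holds. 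Combining, $H(a,b)\geq\varphi(a)-\psi(a-1)>0$, so $I$ has the property $P^{*}$.

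Finally, Theorem \ref{t1-1} applies, giving that a minimiser $G$ over $\mathcal{G}_{n,k}$ is bidegreed with degree set $\{2,3\}$, $m_{2,3}=2$, $m_{2,2}=n-2k+1$, $m_{3,3}=3k-4$, and I would finish by substituting these degrees into $I$:
\[
SO(G)=2\sqrt{2^{2}+3^{2}}+(n-2k+1)\sqrt{2^{2}+2^{2}}+(3k-4)\sqrt{3^{2}+3^{2}}=2\sqrt{13}+\big(2(n-2k+1)+3(3k-4)\big)\sqrt{2},
\]
and $2(n-2k+1)+3(3k-4)=2n+5k-10$ yields $SO(G)=(2n+5k-10)\sqrt{2}+2\sqrt{13}$. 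The only step that requires genuine effort is establishing $H(a,b)>0$, i.e.\ that $\sqrt{x^{2}+y^{2}}$ has the property $P^{*}$; the monotonicity conditions in $P$ are one-line derivative checks and the closed form for $SO(G)$ is a pure substitution.
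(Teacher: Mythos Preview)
Your proof is correct. The paper itself does not spell out a proof of this corollary beyond the one-line assertion that $\sqrt{x^{2}+y^{2}}$ has the property $P^{*}$ and then invoking Theorem~\ref{t1-1}; the actual verification of $P^{*}$ appears only implicitly as the special case $\alpha=\tfrac12$ of the argument for the general Sombor index in Corollary~\ref{c3-2}, which proceeds via the Lagrange mean value theorem to bound $I(a,a)-I(a-1,a)$ and $I(b+1,b)-I(b,b)$ and then compares the resulting expressions. Your route is genuinely different: you rationalise the two square-root differences directly, rewrite $H(a,b)=\varphi(a)-\psi(b)$, observe $\psi$ is increasing, and then exploit the coincidence $2(a-1)^{2}+2(a-1)+1=2a^{2}-2a+1$ to reduce $\varphi(a)>\psi(a-1)$ to the trivial $aS>(a-1)S$. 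This is entirely elementary and self-contained, whereas the paper's mean-value approach is less explicit but has the advantage of handling the whole range $\alpha\in[\tfrac12,1)$ uniformly. One minor remark: your intermediate chain $\psi(b)\le\psi(a-2)\le\psi(a-1)$ tacitly uses integrality of $a,b$; since the paper's definition of $P^{*}$ only demands $a>b+1\ge 2$, it is cleaner to pass directly from $b<a-1$ and monotonicity of $\psi$ to $\psi(b)<\psi(a-1)$, which your argument already supports.
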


The general Sombor index \cite{huzh2022} of graph $G$ was defined as $SO_{\alpha}(G)=\left(d_{u}^{2}+d_{v}^{2}\right)^{\alpha}$, where $\alpha\neq 0$.

\begin{lemma}\label{l3-1}
Let $x>0$, $y\geq z>0$, $f_{\alpha}(x,y)=\left(x^{2}+y^{2}\right)^{\alpha}$, $\phi_{\alpha}(x,y,z)=f_{\alpha}(x,y)-f_{\alpha}(x,y-z)$.

{\rm(i)} If $\alpha\in (0,+\infty)$, then $f(x,y)$ is strictly increasing with $x$ (resp. $y$); if $\alpha\in (-\infty,0)$, then $f_{\alpha}(x,y)$ is strictly decreasing with $x$ (resp. $y$).

{\rm(ii)} If $\alpha\in (0,1)$, then $\phi_{\alpha}(x,y,z)$ is strictly decreasing with $x$.
\end{lemma}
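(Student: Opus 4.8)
The plan is to prove both parts by a direct computation of the partial derivative of the relevant function in $x$, followed by a sign analysis. On the region $x>0$, $y\geq z>0$ everything in sight is $C^1$ (all bases are strictly positive), so no separate justification of differentiability is needed.

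For (i), I would compute $\partial_x f_\alpha(x,y) = 2\alpha x\,(x^2+y^2)^{\alpha-1}$. Here $x>0$ and $(x^2+y^2)^{\alpha-1}>0$ for every real $\alpha$, so the sign of this derivative is exactly the sign of $\alpha$: positive when $\alpha\in(0,+\infty)$ and negative when $\alpha\in(-\infty,0)$, giving strict monotonicity in $x$. The statement for $y$ follows verbatim after interchanging the two arguments, using that $f_\alpha$ is symmetric in its arguments and that $y\geq z>0$ guarantees $y>0$.

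For (ii), I would write $\phi_\alpha(x,y,z) = (x^2+y^2)^\alpha - (x^2+(y-z)^2)^\alpha$ and differentiate:
$$\partial_x \phi_\alpha(x,y,z) = 2\alpha x\left[(x^2+y^2)^{\alpha-1} - (x^2+(y-z)^2)^{\alpha-1}\right].$$
The hypothesis $y\geq z>0$ gives $0\leq y-z<y$, hence $0<x^2+(y-z)^2<x^2+y^2$. Since $\alpha\in(0,1)$ makes the exponent $\alpha-1$ negative, the map $t\mapsto t^{\alpha-1}$ is strictly decreasing on $(0,+\infty)$, so the bracketed difference is strictly negative; as $2\alpha x>0$, we conclude $\partial_x \phi_\alpha(x,y,z)<0$, i.e.\ $\phi_\alpha$ is strictly decreasing in $x$.

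I do not anticipate a genuine obstacle here; the only point needing a moment of care is the strict inequality $(y-z)^2<y^2$, which uses $z>0$ (so $y-z<y$) together with $z\leq y$ (so $y-z\geq 0>-y$), with the borderline case $z=y$ harmless since the smaller base is then just $x^2>0$. The write-up should be only a few lines, and the value of the lemma is that, once combined with part (i), it is exactly what is needed to verify that the general Sombor index $SO_\alpha$ has the property $P^{*}$ for $\alpha\in[\frac{1}{2},1)$, so that Theorem \ref{t1-1} applies.
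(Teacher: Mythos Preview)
Your proposal is correct and follows essentially the same approach as the paper: both parts are proved by computing the partial derivative in $x$ and reading off its sign, with the key observation in (ii) that $\alpha-1<0$ makes $t\mapsto t^{\alpha-1}$ strictly decreasing while $x^2+(y-z)^2<x^2+y^2$. Your write-up is in fact a bit more explicit than the paper's in justifying the strict inequality $(y-z)^2<y^2$, but the argument is the same.
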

\begin{proof}
{\rm(i)}
Since $x>0, y>0$, then
\begin{align*}
 \frac{\partial f_{\alpha}(x,y)}{\partial x}=&2\alpha x \left(x^{2}+y^{2}\right)^{\alpha-1}.
\end{align*}
Thus we have $\frac{\partial f_{\alpha}(x,y)}{\partial x}>0$ if $\alpha\in (0,+\infty)$; $\frac{\partial f_{\alpha}(x,y)}{\partial x}<0$ if $\alpha\in (-\infty,0)$.

{\rm(ii)}
Since $x>0, y\geq z>0$, and $\alpha\in (0,1)$, we have
\begin{align*}
 \frac{\partial \phi_{\alpha}(x,y,z)}{\partial x}=&2\alpha x\left(\left(x^{2}+y^{2}\right)^{\alpha-1}-\left(x^{2}+(y-z)^{2}\right)^{\alpha-1}\right)<0.
\end{align*}

This completes the proof.
\end{proof}

\begin{corollary}\label{c3-2}
For $\alpha\in [\frac{1}{2},1)$, $k\geq3$ and $n\geq 5(k-1)$, if $G$ is a graph achieving the least value of general Sombor index over $\mathcal{G}_{n,k}$, then
$G$ is a bidegreed graph with degree set $\{2,3\}$, and $m_{2,3}=2,m_{2,2}=n-2k+1,m_{3,3}=3k-4$. Moreover, $SO_{\alpha}(G)=2\times 13^{\alpha}+(n-2k+1)\times 8^{\alpha}+(3k-4)\times 18^{\alpha}$.
\end{corollary}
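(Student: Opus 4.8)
The plan is to derive Corollary~\ref{c3-2} straight from Theorem~\ref{t1-1}: it is enough to show that the edge-weight function $I(x,y)=f_{\alpha}(x,y)=(x^{2}+y^{2})^{\alpha}$ has the property $P^{*}$ for every $\alpha\in[\tfrac12,1)$. Once that is known, Theorem~\ref{t1-1} gives $m_{2,3}=2$, $m_{2,2}=n-2k+1$, $m_{3,3}=3k-4$, and substituting $I(2,3)=13^{\alpha}$, $I(2,2)=8^{\alpha}$, $I(3,3)=18^{\alpha}$ into $TI(G)=2I(2,3)+(n-2k+1)I(2,2)+(3k-4)I(3,3)$ produces $SO_{\alpha}(G)=2\cdot 13^{\alpha}+(n-2k+1)\cdot 8^{\alpha}+(3k-4)\cdot 18^{\alpha}$, exactly as asserted.

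First I would check that $f_{\alpha}$ has property $P$ on this range. Monotonicity of $f_{\alpha}(x,y)$ in $x$ (hence, by symmetry, in $y$) is Lemma~\ref{l3-1}(i) with $\alpha>0$. For the second clause, fix $a\ge b\ge 0$, put $z=a-b\ge 0$, and use symmetry to rewrite $h(x)=I(a,x)-I(b,x)=f_{\alpha}(x,a)-f_{\alpha}(x,a-z)=\phi_{\alpha}(x,a,z)$; this is non-increasing in $x$ by Lemma~\ref{l3-1}(ii) when $\alpha\in(0,1)$ (and constant if $z=0$). So $f_{\alpha}$ has property $P$.

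The substantive point is the extra condition of $P^{*}$, namely $H(a,b)>0$ for all $a>b+1\ge 2$, where, after computing the four values $I(a,a),I(a-1,a),I(b+1,b),I(b,b)$,
$$H(a,b)=a\bigl[(2a^{2})^{\alpha}-(2a^{2}-2a+1)^{\alpha}\bigr]-b\bigl[(2b^{2}+2b+1)^{\alpha}-(2b^{2})^{\alpha}\bigr].$$
I would set $P(t)=t\bigl[(2t^{2})^{\alpha}-(2t^{2}-2t+1)^{\alpha}\bigr]$ and $Q(t)=t\bigl[(2t^{2}+2t+1)^{\alpha}-(2t^{2})^{\alpha}\bigr]$, so that $H(a,b)=P(a)-Q(b)$ with $P,Q>0$. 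Applying the mean value theorem to $u\mapsto u^{\alpha}$ on the two intervals, there are $\xi\in(2t^{2}-2t+1,\,2t^{2})$ and $\eta\in(2t^{2},\,2t^{2}+2t+1)$ with $P(t)=\alpha\,t(2t-1)\,\xi^{\alpha-1}$ and $Q(t)=\alpha\,t(2t+1)\,\eta^{\alpha-1}$. Since $\alpha-1<0$, the map $u\mapsto u^{\alpha-1}$ is decreasing, so $\xi^{\alpha-1}>(2t^{2})^{\alpha-1}$ and $\eta^{\alpha-1}<(2t^{2})^{\alpha-1}$, whence
$$P(t)>\alpha\,2^{\alpha-1}(2t-1)\,t^{2\alpha-1},\qquad Q(t)<\alpha\,2^{\alpha-1}(2t+1)\,t^{2\alpha-1}.$$
Now when $a>b+1\ge 2$ we have $2a-1>2b+1>0$, and since $\alpha\ge\tfrac12$ the exponent $2\alpha-1$ is nonnegative while $a>b\ge 1$, so $a^{2\alpha-1}\ge b^{2\alpha-1}$; combining the two bounds gives $P(a)>\alpha\,2^{\alpha-1}(2a-1)a^{2\alpha-1}\ge\alpha\,2^{\alpha-1}(2b+1)b^{2\alpha-1}>Q(b)$, i.e.\ $H(a,b)>0$. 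Hence $f_{\alpha}$ has property $P^{*}$ and Theorem~\ref{t1-1} applies.

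The step I expect to be the crux is this last one, $H(a,b)>0$; the idea that makes it routine is the mean-value reduction, which isolates the factor $t^{2\alpha-1}$, and the whole argument hinges on the monotonicity of that factor, valid precisely when $\alpha\ge\tfrac12$. For $\alpha\in(0,\tfrac12)$ one has $2\alpha-1<0$, the crude comparison through the common value $(2t^{2})^{\alpha-1}$ breaks down, and a finer control of the mean-value points $\xi,\eta$ would be needed — which is why the statement is restricted to $\alpha\in[\tfrac12,1)$.
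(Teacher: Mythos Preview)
Your proof is correct and follows essentially the same route as the paper: verify property~$P$ via Lemma~\ref{l3-1}, then establish $H(a,b)>0$ by a mean-value reduction that isolates the factor $t^{2\alpha-1}$ and uses $\alpha\ge\tfrac12$ to compare. The only cosmetic difference is that the paper applies the mean value theorem to $t\mapsto (t^{2}+c^{2})^{\alpha}$ on $[a-1,a]$ and $[b,b+1]$, whereas you apply it to $u\mapsto u^{\alpha}$ after expanding the arguments; both arrive at the same decisive inequality $(2a-1)a^{2\alpha-1}>(2b+1)b^{2\alpha-1}$ (in the paper's variables, $a^{2\alpha-1}\zeta>b^{2\alpha-1}\xi$).
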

\begin{proof}
In this case, $I(x,y)=\left(x^{2}+y^{2}\right)^{\alpha}$ for $\alpha\in [\frac{1}{2},1)$.
$\frac{\partial I(x,y)}{\partial x}=2\alpha x(x^{2}+y^{2})^{\alpha-1}$;
$\frac{\partial I(x,y)}{\partial y}=2\alpha y(x^{2}+y^{2})^{\alpha-1}$.
Next we proof that for any $a>b+1\geq 2$, $H(a,b)>0$, where $H(a,b)=a[I(a,a)-I(a-1,a)]-b[I(b+1,b)-I(b,b)]$.
By the \textbf{Lagrange mean value theorem}, there exists $\zeta\in (a-1,a)$, such that
$I(a,a)-I(a-1,a)=2\alpha \zeta(\zeta^{2}+a^{2})^{\alpha-1}$;
there exists $\xi\in (b,b+1)$, such that
$I(b+1,b)-I(b,b)=2\alpha \xi(\xi^{2}+b^{2})^{\alpha-1}$.
Thus $H(a,b)=2\alpha [a\zeta(\zeta^{2}+a^{2})^{\alpha-1}-b\xi(\xi^{2}+b^{2})^{\alpha-1}]
>2\alpha [a\zeta(a^{2}+a^{2})^{\alpha-1}-b\xi(b^{2}+b^{2})^{\alpha-1}]
=\alpha 2^{\alpha}(a^{2\alpha-1}\zeta-b^{2\alpha-1}\xi)>0$ for $\alpha\in [\frac{1}{2},1)$.

Combine with Lemma \ref{l3-1}, we know that $\left(x^{2}+y^{2}\right)^{\alpha}$
has the property $P^{*}$ for $\alpha\in [\frac{1}{2},1)$. By Theorem \ref{t1-1}, we have this conclusion.
\end{proof}

The $p$-Sombor index \cite{trdo2021} of graph $G$ was defined as $S_{p}(G)=\left(d_{u}^{p}+d_{v}^{p}\right)^{\frac{1}{p}}$, where $p\neq 0$.

\begin{lemma}\label{l3-2}
Let $p\geq 1$, $x>0$, $y\geq z>0$, $g(x,y)=\left(x^{p}+y^{p}\right)^{\frac{1}{p}}$, $\varphi(x,y,z)=g(x,y)-g(x,y-z)$ is strictly decreasing with $x$.
\end{lemma}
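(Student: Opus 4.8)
The plan is to follow the template of Lemma~\ref{l3-1}(ii): write $\varphi$ out explicitly, differentiate once with respect to $x$, and then factor the derivative into a product whose sign is evident.

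Concretely, I would start from
\[
\varphi(x,y,z)=\bigl(x^{p}+y^{p}\bigr)^{1/p}-\bigl(x^{p}+(y-z)^{p}\bigr)^{1/p}
\]
and compute, via the chain rule,
\[
\frac{\partial \varphi(x,y,z)}{\partial x}=x^{p-1}\left[\bigl(x^{p}+y^{p}\bigr)^{\frac{1-p}{p}}-\bigl(x^{p}+(y-z)^{p}\bigr)^{\frac{1-p}{p}}\right].
\]
Here $x^{p-1}>0$ because $x>0$. For the bracket, observe that $z>0$ together with $p\geq 1$ makes $t\mapsto t^{p}$ strictly increasing on $[0,\infty)$, so $y^{p}>(y-z)^{p}\geq 0$ and therefore $0<x^{p}+(y-z)^{p}<x^{p}+y^{p}$. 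When $p>1$ the exponent $\frac{1-p}{p}$ is negative, so $t\mapsto t^{(1-p)/p}$ is strictly decreasing on $(0,\infty)$; applying this to the strict inequality between the two bases shows the bracket is strictly negative, hence $\partial\varphi/\partial x<0$, which is the assertion. (At $p=1$ the bracket is identically zero and $\varphi\equiv z$ is merely constant in $x$; the literal ``strictly decreasing'' statement thus needs $p>1$, which is exactly the range $p\in(1,2]$ invoked in the $p$-Sombor application.)

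I do not anticipate a genuine obstacle: the whole point is a one-line differentiation followed by a monotonicity comparison, and the only thing demanding attention is tracking the sign of the exponent $\frac{1-p}{p}$ and the corresponding direction of monotonicity of the power function, precisely as in Lemma~\ref{l3-1}(ii). If one prefers to sidestep differentiating $\varphi$ directly, an equivalent route is to write $\varphi(x,y,z)=\int_{y-z}^{y}t^{p-1}\bigl(x^{p}+t^{p}\bigr)^{1/p-1}\,dt$ and note that, since $1/p-1<0$, the integrand is strictly decreasing in $x$ for each fixed $t>0$; but the direct computation above is shorter and stylistically matches the neighbouring lemmas.
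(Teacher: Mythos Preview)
Your argument is correct and is essentially the same as the paper's: differentiate $\varphi$ with respect to $x$ and read off the sign from the monotonicity of $t\mapsto t^{(1-p)/p}$. In fact your factor $x^{p-1}$ is the correct one (the paper writes $x$, a minor slip), and your remark that the strict inequality actually requires $p>1$ is well taken, matching the range $p\in(1,2]$ used in the application.
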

\begin{proof}
Since $p\geq 1$, then
\begin{align*}
 \frac{\partial \varphi(x,y,z)}{\partial x}=& x\left(x^{p}+y^{p}\right)^{\frac{1}{p}-1}-x\left(x^{p}+(y-z)^{p}\right)^{\frac{1}{p}-1}<0.
\end{align*}
Thus $\varphi(x,y,z)$ is strictly decreasing with $x$.
\end{proof}

\begin{corollary}\label{c3-3}
For $p\in (1,2]$, $k\geq3$ and $n\geq 5(k-1)$, if $G$ is a graph achieving the least value of $p$-Sombor index over $\mathcal{G}_{n,k}$, then
$G$ is a bidegreed graph with degree set $\{2,3\}$, and $m_{2,3}=2,m_{2,2}=n-2k+1,m_{3,3}=3k-4$. Moreover, $S_{p}(G)=2\times \left(2^{p}+3^{p}\right)^{\frac{1}{p}}+(n-2k+1)\times \left(2^{p}+2^{p}\right)^{\frac{1}{p}}+(3k-4)\times \left(3^{p}+3^{p}\right)^{\frac{1}{p}}$.
\end{corollary}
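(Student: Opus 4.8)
\textbf{Proof proposal for Corollary \ref{c3-3}.}
The plan is to invoke Theorem \ref{t1-1}, so the entire task reduces to verifying that the edge-weight function $I(x,y)=\left(x^{p}+y^{p}\right)^{\frac{1}{p}}$ has the property $P^{*}$ for $p\in(1,2]$. Recall that property $P^{*}$ has three ingredients: (a) $I$ is monotonically increasing in each variable; (b) for $a\geq b\geq 0$, $h(x)=I(a,x)-I(b,x)$ is monotonically decreasing in $x$; and (c) for any $a>b+1\geq 2$, $H(a,b)=a[I(a,a)-I(a-1,a)]-b[I(b+1,b)-I(b,b)]>0$.

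First I would dispose of (a): $\frac{\partial I}{\partial x}=x^{p-1}(x^{p}+y^{p})^{\frac{1}{p}-1}>0$ for $x,y>0$, so $I$ is strictly increasing, hence in particular monotone. For (b), note that $h(x)=g(x,a)-g(x,b)$ in the notation of Lemma \ref{l3-2} (after swapping the roles of the arguments, using symmetry of $I$); taking $y=a$, $z=a-b\geq 0$ this is exactly $\varphi(x,a,a-b)$, which Lemma \ref{l3-2} shows is strictly decreasing in $x$ whenever $p\geq 1$ — and $p\in(1,2]$ certainly satisfies this. So (a) and (b) together give property $P$.

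The main work is (c). I would mimic the argument in the proof of Corollary \ref{c3-2}: apply the Lagrange mean value theorem to write $I(a,a)-I(a-1,a)=\frac{\partial I}{\partial x}(\zeta,a)=\zeta^{p-1}(\zeta^{p}+a^{p})^{\frac{1}{p}-1}$ for some $\zeta\in(a-1,a)$, and $I(b+1,b)-I(b,b)=\frac{\partial I}{\partial x}(\xi,b)=\xi^{p-1}(\xi^{p}+b^{p})^{\frac{1}{p}-1}$ for some $\xi\in(b,b+1)$. Then
\[
H(a,b)=a\,\zeta^{p-1}(\zeta^{p}+a^{p})^{\frac{1}{p}-1}-b\,\xi^{p-1}(\xi^{p}+b^{p})^{\frac{1}{p}-1}.
\]
Since $\tfrac1p-1<0$ and $\zeta<a$, we bound $(\zeta^{p}+a^{p})^{\frac{1}{p}-1}>(a^{p}+a^{p})^{\frac{1}{p}-1}=2^{\frac1p-1}a^{1-p}$; similarly, since $\xi<b+1$ one wants an upper bound $(\xi^{p}+b^{p})^{\frac{1}{p}-1}\le(b^{p}+b^{p})^{\frac1p-1}=2^{\frac1p-1}b^{1-p}$, which is where a little care is needed. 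This yields $H(a,b)>2^{\frac1p-1}\big(a\zeta^{p-1}a^{1-p}-b\xi^{p-1}b^{1-p}\big)=2^{\frac1p-1}\big(a^{2-p}\zeta^{p-1}-b^{2-p}\xi^{p-1}\big)$, and since $a>b$, $\zeta>\xi$ (because $\zeta>a-1\geq b+1>\xi$), and both exponents $2-p\geq 0$ and $p-1>0$ are nonnegative for $p\in(1,2]$, every factor in the first product dominates the corresponding factor in the second, so $H(a,b)>0$.

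The delicate point — and the only real obstacle — is that the naive estimate $(\xi^{p}+b^{p})^{\frac1p-1}\le 2^{\frac1p-1}b^{1-p}$ requires $\xi\le b$, which is false since $\xi\in(b,b+1)$; one must instead either keep $(\xi^{p}+b^{p})^{\frac1p-1}<(b^{p})^{\frac1p-1}=b^{1-p}$ (valid since the bracket exceeds $b^{p}$ and the exponent is negative) and absorb the missing $2^{\frac1p-1}<1$ factor, or argue directly that the function $t\mapsto t^{1-p}\cdot(\text{something in }t)$ is decreasing. Either way the inequality $a>b+1$, combined with $\zeta>a-1$ and $\xi<b+1$, is exactly what forces $\zeta>\xi$ and makes the comparison of the two terms go through; this is the same mechanism used in Corollary \ref{c3-2} for the general Sombor index. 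Once property $P^{*}$ is confirmed, Theorem \ref{t1-1} immediately gives the structural description ($m_{2,3}=2$, $m_{2,2}=n-2k+1$, $m_{3,3}=3k-4$), and substituting $I(2,3)=(2^{p}+3^{p})^{\frac1p}$, $I(2,2)=(2^{p}+2^{p})^{\frac1p}$, $I(3,3)=(3^{p}+3^{p})^{\frac1p}$ into the formula $TI(G)=2I(2,3)+(n-2k+1)I(2,2)+(3k-4)I(3,3)$ yields the stated value of $S_{p}(G)$.
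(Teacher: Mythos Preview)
Your approach is exactly the paper's: verify that $I(x,y)=(x^{p}+y^{p})^{1/p}$ has property $P^{*}$ using the partial derivative together with the Lagrange mean value theorem, and then invoke Theorem \ref{t1-1}. Your derivative $\partial I/\partial x = x^{p-1}(x^{p}+y^{p})^{1/p-1}$ is in fact the correct one (the paper writes $x(x^{p}+y^{p})^{1/p-1}$, evidently a slip, though the shape of its argument is unaffected), and your appeal to Lemma \ref{l3-2} for part (b) is exactly right.

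The one place you hesitate is unnecessary. The estimate $(\xi^{p}+b^{p})^{1/p-1}<(2b^{p})^{1/p-1}=2^{1/p-1}b^{1-p}$ does \emph{not} require $\xi\le b$; it requires $\xi\ge b$, because the exponent $1/p-1$ is negative for $p>1$ and so $t\mapsto t^{1/p-1}$ is decreasing. Since $\xi\in(b,b+1)$ gives $\xi^{p}+b^{p}>2b^{p}$, the bound is immediate, and the paper uses it without comment. Your first proposed workaround (replacing $2b^{p}$ by $b^{p}$) is actually too weak: for $p=2$ it would reduce to $\zeta>\sqrt{2}\,\xi$, which can fail when $a$ is only slightly larger than $b+1$. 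So simply drop the ``delicate point'' paragraph and the argument is complete, matching the paper's line for line.
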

\begin{proof}
In this case, $I(x,y)=\left(x^{p}+y^{p}\right)^{\frac{1}{p}}$ for $p\in (1,2]$.
Thus
$\frac{\partial I(x,y)}{\partial x}=x\left(x^{p}+y^{p}\right)^{\frac{1}{p}-1}$;
$\frac{\partial I(x,y)}{\partial y}=y\left(x^{p}+y^{p}\right)^{\frac{1}{p}-1}$.
Next we proof that for any $a>b+1\geq 2$, $H(a,b)>0$, where $H(a,b)=a[I(a,a)-I(a-1,a)]-b[I(b+1,b)-I(b,b)]$.
By the \textbf{Lagrange mean value theorem}, there exists $\zeta\in (a-1,a)$, such that
$I(a,a)-I(a-1,a)=\zeta\left(\zeta^{p}+a^{p}\right)^{\frac{1}{p}-1}$;
there exists $\xi\in (b,b+1)$, such that
$I(b+1,b)-I(b,b)=\xi\left(\xi^{p}+b^{p}\right)^{\frac{1}{p}-1}$.
Thus $H(a,b)=a\zeta\left(\zeta^{p}+a^{p}\right)^{\frac{1}{p}-1}-b\xi\left(\xi^{p}+b^{p}\right)^{\frac{1}{p}-1}
>a\zeta\left(a^{p}+a^{p}\right)^{\frac{1}{p}-1}-b\xi\left(b^{p}+b^{p}\right)^{\frac{1}{p}-1}
=2^{\frac{1}{p}-1}(a^{2-p}\zeta-b^{2-p}\xi )>0$ for $p\in (1,2]$.

Combine with Lemma \ref{l3-1}, we know that $\left(x^{p}+y^{p}\right)^{\frac{1}{p}}$
has the property $P^{*}$ for $p\in (1,2]$. By Theorem \ref{t1-1}, we have this conclusion.
\end{proof}

The general sum-connectivity index \cite{zhtr2010} of graph $G$ was defined as $\chi_{\alpha}(G)=\left(d_{u}+d_{v}\right)^{\alpha}$, where $\alpha\neq 0$.
Similar to the proof of Corollary \ref{c3-2} and Corollary \ref{c3-3}, we have

\begin{corollary}\label{c3-4}
For $\alpha\in (0,1)$, $k\geq3$ and $n\geq 5(k-1)$, if $G$ is a graph achieving the least value of general sum-connectivity index over $\mathcal{G}_{n,k}$, then
$G$ is a bidegreed graph with degree set $\{2,3\}$, and $m_{2,3}=2,m_{2,2}=n-2k+1,m_{3,3}=3k-4$. Moreover, $\chi_{\alpha}(G)=2\times 5^{\alpha}+(n-2k+1)\times 4^{\alpha}+(3k-4)\times 9^{\alpha}$.
\end{corollary}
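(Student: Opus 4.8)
The plan is to verify that the edge-weight function $I(x,y)=(x+y)^{\alpha}$ has the property $P^{*}$ for $\alpha\in(0,1)$ and then to invoke Theorem \ref{t1-1}, in complete analogy with Corollaries \ref{c3-2} and \ref{c3-3}.

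First I would check that $I$ has the property $P$. Since $\frac{\partial I(x,y)}{\partial x}=\alpha(x+y)^{\alpha-1}>0$ and, by symmetry, $\frac{\partial I(x,y)}{\partial y}=\alpha(x+y)^{\alpha-1}>0$, the function $I(x,y)$ is strictly increasing in each argument. For fixed $a\geq b\geq 0$ put $h(x)=I(a,x)-I(b,x)=(a+x)^{\alpha}-(b+x)^{\alpha}$; then $h'(x)=\alpha\bigl((a+x)^{\alpha-1}-(b+x)^{\alpha-1}\bigr)\leq 0$, because $\alpha-1<0$ makes $t\mapsto t^{\alpha-1}$ decreasing while $a+x\geq b+x$. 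Hence $h$ is non-increasing and $I(x,y)$ has the property $P$.

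Next I would establish the extra inequality needed for $P^{*}$, namely $H(a,b)>0$ whenever $a>b+1\geq 2$, where $H(a,b)=a[I(a,a)-I(a-1,a)]-b[I(b+1,b)-I(b,b)]$. By the Lagrange mean value theorem (applied to $s\mapsto(s+a)^{\alpha}$ on $[a-1,a]$) there is $\zeta\in(a-1,a)$ with $I(a,a)-I(a-1,a)=(2a)^{\alpha}-(2a-1)^{\alpha}=\alpha(\zeta+a)^{\alpha-1}$, and (applied to $s\mapsto(s+b)^{\alpha}$ on $[b,b+1]$) there is $\xi\in(b,b+1)$ with $I(b+1,b)-I(b,b)=(2b+1)^{\alpha}-(2b)^{\alpha}=\alpha(\xi+b)^{\alpha-1}$. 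Because $\zeta<a$ forces $\zeta+a<2a$, $\xi>b$ forces $\xi+b>2b$, and $\alpha-1<0$, we get
\[
H(a,b)=\alpha\bigl(a(\zeta+a)^{\alpha-1}-b(\xi+b)^{\alpha-1}\bigr)>\alpha\bigl(a(2a)^{\alpha-1}-b(2b)^{\alpha-1}\bigr)=\alpha\,2^{\alpha-1}\bigl(a^{\alpha}-b^{\alpha}\bigr)>0,
\]
since $a>b>0$ and $\alpha>0$. Together with the previous step, this shows that $(x+y)^{\alpha}$ has the property $P^{*}$ for $\alpha\in(0,1)$.

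Finally I would apply Theorem \ref{t1-1} with this $I$: for $k\geq 3$ and $n\geq 5(k-1)$, a graph $G$ of minimum $\chi_{\alpha}$-value over $\mathcal{G}_{n,k}$ is bidegreed with degree set $\{2,3\}$ and has $m_{2,3}=2$, $m_{2,2}=n-2k+1$, $m_{3,3}=3k-4$, so that $\chi_{\alpha}(G)=2I(2,3)+(n-2k+1)I(2,2)+(3k-4)I(3,3)$; substituting the values of $I$ at the pairs $(2,3)$, $(2,2)$, $(3,3)$ then yields the closed form. I expect the only genuine obstacle to be the inequality $H(a,b)>0$: one has to introduce the two mean value points and then bound them in the correct direction, which hinges on $\alpha-1<0$. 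The monotonicity in the first step is a routine derivative check, and the last step is a direct substitution into Theorem \ref{t1-1}.
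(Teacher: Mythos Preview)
Your proposal is correct and follows exactly the approach the paper intends: it verifies property $P$ by differentiation, establishes $H(a,b)>0$ via the Lagrange mean value theorem, and then invokes Theorem~\ref{t1-1}, precisely as in Corollaries~\ref{c3-2} and~\ref{c3-3} (the paper itself omits the details, saying only that the argument is similar). One incidental remark: carrying out your final substitution gives $I(3,3)=(3+3)^{\alpha}=6^{\alpha}$, so the $9^{\alpha}$ printed in the stated formula is a typo that your computation would catch.
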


\section{\normalsize Concluding remarks}\setcounter{equation}{0}
In this paper, we try to unify the solution for the minimum VDB index of $k$-cyclic graphs. For a VDB topological index $TI$ with the property $P^{*}$, we obtain the minimum $k$-cyclic graphs for $k\geq3$, $n\geq 5(k-1)$. These VDB topological indices with property $P^{*}$ include the Sombor index, the general Sombor index for $\alpha \in [\frac{1}{2},1)$, the $p$-Sombor index for $p \in (1,2]$, the general sum-connectivity index for $\alpha \in (0,1)$ and so on.
We do not need to deal with the topological indices one by one separately.
However, we only solve the problem for the VDB indices with the nice property $P^{*}$.
We hope that in the future we can find more nice properties that can cover more topological indices.

\section*{\normalsize Acknowledgement}
We take this opportunity to thank the anonymous reviewers for their critical reading of the manuscript
and suggestions which have immensely helped us in getting the article to its present form.


\end{document}